\documentclass[12pt]{amsart}
\usepackage{amsmath, amsthm, color}
\usepackage{graphicx}
\usepackage{caption}
\usepackage{mathtools}
\usepackage{enumitem}
\usepackage{xfrac}
\usepackage{verbatim}
\usepackage{tikz}
\usetikzlibrary{matrix}
\usetikzlibrary{arrows}
\usepackage{algorithm}
\usepackage[noend]{algpseudocode}
\usepackage{caption}
\usepackage{subcaption}

\usepackage{makecell}
\usepackage{array}

\newtheorem{theorem}{Theorem}
\newtheorem{proposition}[theorem]{Proposition}
\newtheorem{lemma}[theorem]{Lemma}
\newtheorem{corollary}[theorem]{Corollary}

\theoremstyle{definition}

\newtheorem{remark}[theorem]{Remark}
\newtheorem{conjecture}[theorem]{Conjecture}

\newtheorem{example}[theorem]{Example}


\newcommand{\RR}{\mathbb{R}}

\newcommand{\CC}{\mathbb{C}}

\newcommand{\Loss}{\mathcal{L}}




\title{Complex Critical Points of Deep Linear Neural Networks}
\author{Ayush Bharadwaj and Serkan Ho\c{s}ten}
\date{}

\begin{document}


\maketitle

\begin{abstract}
We extend the work of Mehta, Chen, Tang, and Hauenstein on computing the complex critical points of the loss function of deep linear neutral networks when the activation function is the identity function. For networks with a single hidden layer trained on a single data point we give an improved bound on the number of complex critical points of the loss function. We show that for any number of hidden layers complex critical points with zero coordinates arise in certain patterns which we completely classify for networks with one hidden layer. We report our results of computational experiments with varying network architectures defining 
small deep linear networks using {\em HomotopyContinuation.jl}.

\end{abstract}


\section{Introduction} \label{intro}

Machine learning applications through deep learning techniques have been enormously successful in areas ranging from natural language processing and object recognition to drug discovery \cite{BGC15, LBH15}. These techniques involve optimizing a non-convex loss (cost) function such as the mean squared error between the observed and predicted
data from the underlying neural network. Typically, this is an NP-hard problem \cite{BR88} where the loss function has numerous local minima. Though methods such as stochastic gradient descent are empirically observed to close in to 
{\em good enough} local minima (see \cite{BH89,GT92,YC95}), a theoretical understanding for the success of deep learning is incomplete. 

A deep {\em linear} neural network is a simpler model on which detailed analysis is possible \cite{BH89}. They differ from more general deep neural networks in the activation function used: linear neural networks employ linear activation functions instead of typical nonlinear ones such as ReLU. Nevertheless, the loss function of the deep linear networks is non-convex, and these networks exhibit many characteristics found in deep nonlinear networks. They have been used as a testing ground for ideas in general artificial neural networks \cite{BH95}. 

Our starting point is the work of Mehta, Chen, Tang, and Hauenstein \cite{MCTH22} in which the authors employ techniques from (numerical) algebraic geometry to compute and analyze complex and real critical points of the loss function of deep linear networks. They introduce a generalized Tikhonov regularization of the loss function (see below) and demonstrate that this turns positive dimensional components of the set of critical points of the loss function (known as flat critical sets) into isolated nondegenerate critical points of its regularized version. The resulting system of polynomial equations are sparse, and the Bernshtein-Kushnirenko-Khovanskii (BKK) bound \cite{BKK, HS95} gives an upper bound on the number of 
the complex critical points of the regularized loss function. Further, Mehta et. al. use numerical homotopy continuation methods \cite{TYLi03, SW05} implemented in 
{\it Bertini} \cite{BHSW} to find the critical points
of deep linear neural networks with one hidden layer trained on up to five data points. They report that, in these instances, the numbers of complex and real critical points are substantially lower than the corresponding BKK bound and the Dedieu-Malajovich bound \cite{DM08} for the average number of real solutions for systems with normally distributed coefficients.
 
\subsection*{Deep linear neural networks}
A feedforward neural network is a function 
$f \, : \, \RR^{d_x} \longrightarrow \RR^{d_y}$ with
$$f  = \omega_{H+1} \circ \phi  \circ \omega_H \cdots \phi \circ \omega_2 \circ \phi \circ \omega_1$$
where $\omega_i \, : \,  \RR^{d_{i-1}} \longrightarrow \RR^{d_i}$ is an affine transformation for $i=1,\ldots, H+1$
and $\phi \, : \, \RR^{d_*} \longrightarrow \RR^{d_*}$
is the activation function. The activation function is applied to each coordinate of a vector and is a non-linear 
function in most applications. In a fixed basis $\omega_i$
is represented by a $d_i \times d_{i-1}$ weight matrix $W_i$
together with a bias vector $b_i \in \RR^{d_i}$. Note that
$W_1$ is a $d_1 \times d_x$ matrix and $W_{H+1}$ is a $d_y \times d_H$ matrix.  The weight matrices $W_1, \ldots, W_H$
correspond to the hidden layers of artificial neurons whereas $W_{H+1}$ corresponds to the output layer of the network. 

In a deep {\em linear} neural network the activation function is chosen to be an affine function. Hence in this 
case we can assume $\phi$ to be the identity function. We will further assume that all bias vectors are zero vectors, i.e. all $\omega_i$ are linear transformations determined
by the entries (weights) in $W_i$, $i=1,\ldots, H+1$. We let
the number of weights to be $N = d_xd_1 + d_1d_2 + \cdots + d_Hd_y$.

\subsection*{The loss function and its regularization}

To train a deep linear neural network, $m$ data points $x_1, \ldots, x_m$ in $\RR^{d_x}$ are paired with $m$ output vectors $y_1, \ldots, y_m \in \RR^{d_y}$. We collect the input and output vectors as columns of two matrices $X \in \RR^{d_x \times m}$ and $Y \in \RR^{d_y \times m}$. Typically, one needs to determine the weight matrices $W_1, \ldots, W_{H+1}$ that will minimize the loss function:
$$ \Loss(W) = \frac{1}{2} \sum_{i=1}^m \lVert W_{H+1}W_H \cdots W_1x_i - y_i \rVert_2^2.$$
We will follow \cite{MCTH22} to regularize the loss function by using regularization matrices $\Lambda_1, 
\ldots, \Lambda_{H+1}$ where each $\Lambda_i$ is the same size as $W_i$. We arrive at the regularized loss function
\begin{equation}
\label{eq:regLoss}
\Loss^\Lambda(W) = \Loss(W) + \frac{1}{2} \left( \lVert \Lambda_1 \circ W_1 \rVert_F^2 + \cdots + \lVert \Lambda_{H+1} \circ W_{H+1} \rVert_F^2 \right)
\end{equation}
where $\Lambda_i \circ W_i$ denotes the entrywise (Hadamard) product of $\Lambda_i$ and $W_i$. This choice of regularization is justified by the results (see Theorem 1 
and Theorem 2 in \cite{MCTH22}) that for almost all choices of the regularization matrices the solutions to 
the gradient system $\nabla \Loss^\Lambda = 0$ are isolated
and nondegenerate. 

\subsection*{Our contributions}
This paper is about the complex critical points of $\Loss^\Lambda$, namely, the solutions to $\nabla \Loss^\Lambda = 0$. This gradient system is a system of $N$
polynomial equations in $N$ variables (weights) where 
each polynomial has degree $2H+1$. This set up
opens the door to using (numerical) algebraic geometry to bound the number of critical points as well as to compute them. 

In Section~\ref{sec2} we will summarize the gradient equations for the regularized loss function  of deep linear
neural networks. In Section~\ref{sec3} we will prove a bound on the number of complex critical points of the loss function of a linear neural network with one hidden layer trained on a single data point. We achieve this bound by 
rewriting the gradient equations. Our new bound is much better than the classic B\'ezout bound as well as the BKK bound previously observed \cite[Table I]{MCTH22} and confirmed and expanded by us (see Table \ref{tab:Table_H1m1} in the Appendix). 

Section~\ref{sec4} is devoted to understanding critical points where certain weights are zero. We show that, for any deep linear network trained
on a single data point, if a weight is zero, then 
the entire row or column containing that weight must be zero. We also prove that row $k$  of the weight
matrix $W_i$ is entirely zero if and only if the column $k$ of the weight matrix $W_{i+1}$ is entirely zero. As a corollary, we completely classify the zero patterns of complex critical points for linear networks with one hidden
layer trained on a single data point. 

In Section~\ref{sec5} we turn to our computational experiments. Using {\it HomotopyContinuation.jl} \cite{HomotopyContinuation} we solve the gradient equations 
for different parameter values of $d_x, d_y$ and $d_i$ when
$H, m \in \{1,2\}$. These extend the computational horizon beyond the one set in \cite{MCTH22}. We believe the extended data we provide will inspire further results.

\section{Gradient Equations for the Loss Function}
\label{sec2}
The critical points of the regularized loss function $\Loss^\Lambda(W)$ are the solutions to $\nabla \Loss^\Lambda = 0$, the gradient equations. We derive the gradient equations in a compact form. 

Let $W=W_{H+1}W_H\cdots W_1$, and let $U_i^T = \prod_{j=i+1}^{H+1} W_j^T$ and $V_i^T = \prod_{j=1}^{i-1} W_j^T$. Then 
\begin{equation}
\label{eq:gradLoss}
\frac{\partial \Loss^\Lambda}{\partial W_i} = U_i^T\left(W \left( \sum_{k=1}^m x_kx_k^T \right) - \left(\sum_{k=1}^m y_kx_k^T \right) \right)V_i^T + \Lambda_i \circ W_i
\end{equation}
where the $(j,k)$ entry of the matrix $\frac{\partial \Loss^\Lambda}{\partial W_i}$ is the partial derivative of 
$\Loss^\Lambda$ with respect to the $(j,k)$ entry of $W_i$.

It is easy to observe that the resulting system consists of $N$ polynomials in $N$ variables. The constant term in each polynomial is zero, therefore $0 \in \CC^N$ is always a critical point.  The terms of highest degree in every polynomial have degree $2H+1$. This leads to an explicit classical B\'{e}zout bound (CBB).
\begin{proposition}\cite[Proposition 3]{MCTH22} \label{prop:Bezout}
The regularized loss function $\Loss^\Lambda$ has at most
$(2H+1)^N$ complex critical points.
\end{proposition}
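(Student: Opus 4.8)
The plan is to invoke the classical B\'ezout theorem for the system $\nabla \Loss^\Lambda = 0$ directly. Recall from \eqref{eq:gradLoss} that the gradient system consists of $N$ polynomial equations in the $N$ weight variables, where the block corresponding to layer $i$ is the matrix equation
\[
U_i^T\left(W \left( \sum_{k=1}^m x_kx_k^T \right) - \left(\sum_{k=1}^m y_kx_k^T \right) \right)V_i^T + \Lambda_i \circ W_i = 0 .
\]
First I would bound the degree of each scalar polynomial in this system. The term $\Lambda_i \circ W_i$ is linear (degree $1$) in the weights. The term $U_i^T \left(\sum_k y_k x_k^T\right) V_i^T$ is a product of the $H$ weight matrices other than $W_i$, hence has degree $H$. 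The term $U_i^T W \left(\sum_k x_k x_k^T\right) V_i^T$ is a product of $U_i^T$, all of $W = W_{H+1}\cdots W_1$, and $V_i^T$: it involves the $H$ matrices other than $W_i$ twice each and $W_i$ once, for total degree $2H+1$. So every polynomial in the system has degree exactly $2H+1$ (the highest-degree term cannot cancel generically, but we do not even need that: degree at most $2H+1$ suffices).

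Next I would apply B\'ezout's theorem: a system of $N$ polynomials in $N$ variables over $\CC$ with degrees $e_1,\ldots,e_N$ has at most $\prod_{j=1}^N e_j$ isolated solutions in $\CC^N$ (equivalently in affine space, as $\PP^N \setminus \{x_0 = 0\}$). Here every $e_j \le 2H+1$, so the product is at most $(2H+1)^N$. This gives the stated bound on the number of isolated complex critical points; combined with the genericity result cited from \cite{MCTH22} (Theorems~1 and~2 there) that for almost all $\Lambda$ all critical points are isolated and nondegenerate, the count $(2H+1)^N$ bounds the total number of complex critical points for generic regularization.

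The only subtlety worth a sentence is the passage from projective to affine count: B\'ezout bounds the number of solutions in $\PP^N$ counted with multiplicity, and some of these may lie at infinity or be non-reduced, which only makes the affine count smaller, so the inequality is safe. Since this is exactly \cite[Proposition~3]{MCTH22}, I would simply cite it rather than reproduce the argument, noting only the degree computation above as the one nonroutine ingredient. I do not anticipate a genuine obstacle here; the entire content is the observation that the maximal degree is $2H+1$, which is immediate from the structure of \eqref{eq:gradLoss}.
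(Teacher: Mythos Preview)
Your proposal is correct and follows exactly the paper's approach: the paper does not give a formal proof environment for this proposition but simply observes in the sentence preceding it that each of the $N$ gradient polynomials has highest-degree terms of degree $2H+1$, and then cites this as the classical B\'ezout bound from \cite[Proposition~3]{MCTH22}. Your degree computation for the three summands (degrees $1$, $H$, and $2H+1$) and the appeal to B\'ezout are precisely what is intended, just spelled out in more detail than the paper bothers with.
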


\begin{example} \label{ex:4variables}
Let us consider a 2-layer network with $W_1 = [\alpha_1 \ \alpha_2]$, $W_2 = \begin{bsmallmatrix} \beta_1\\ \beta_2 \end{bsmallmatrix}$, $X = \begin{bsmallmatrix} 1 & 2\\ 3 & 4 \end{bsmallmatrix}$, $Y = \begin{bsmallmatrix} 1 & 3\\ 2 & 4 \end{bsmallmatrix}$, $\Lambda_1 = \begin{bsmallmatrix} 4 & -3 \end{bsmallmatrix}$, and $\Lambda_2 = \begin{bsmallmatrix} -2 \\ 5 \end{bsmallmatrix}$. 
Using these in (\ref{eq:gradLoss}), we get the gradient system
\begin{equation*}
    \begin{split}
    5\alpha_1\beta_1^2 + 5\alpha_1\beta_2^2 + 11\alpha_2\beta_1^2 + 11\alpha_2\beta_2^2 -7\beta_1 -10 \beta_2 +4\alpha_1&= 0\\
    11\alpha_1 \beta_1^2 + 11\alpha_1\beta_2^2 + 25\alpha_2\beta_1^2 + 25\alpha_2 \beta_2^2 - 15\beta_1 - 22\beta_2 -3 \alpha_2&= 0\\
    5\alpha_1^2\beta_1 + 22\alpha_1\alpha_2\beta_1 + 25 \alpha_2^2\beta_1 -7\alpha_1 -15\alpha_2 -2 \beta_1&= 0\\
    5 \alpha_1^2\beta_2 + 22\alpha_1\alpha_2\beta_2 + 25\alpha_2^2\beta_2 -10\alpha_1 -22\alpha_2 +5\beta_2&= 0
    \end{split}
\end{equation*}
\hfill$ \triangle$
\end{example}
As the above example indicates the gradient equations are sparse. This allows the use of the BKK bound for the number of critical points in $(\CC^*)^N$ as well as a modified
BKK bound for the number of critical points in $\CC^N$ 
\cite{LW96, RW96}. For instance, the BKK bound for the solutions in $\CC^4$ for the gradient equations in Example~\ref{ex:4variables} is $33$. The actual number of solutions in $(\CC^*)^4$ and $\CC^4$ are $16$ and $17$, respectively. The B\'{e}zout
bound gives $3^4=81$. Our computational experiments in 
Section~\ref{sec5} (also see \cite{MCTH22}) will show that generally all of these bounds are far from the actual number of critical points.  Starting from the next section
we take a closer look into improving the BKK bound. We close this section with the following result.

\begin{proposition} \label{prop:sameBKK}
Each polynomial in the gradient equations (\ref{eq:gradLoss}) has the same monomial support for generic input and output vectors as $m$, the number of these vectors, varies while 
the rest of the parameters, namely, $H$, $d_x$, $d_y$, and
 $d_i$, $i=1, \ldots, H$ stay constant. In particular, BKK bounds on the number of complex critical points for these systems in $(\CC^*)^N$ and $(\CC)^N$  are the same bounds
 for any $m$.   
\end{proposition}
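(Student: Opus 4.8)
The plan is to show that the monomial support of each gradient polynomial is governed only by the architecture parameters, not by $m$ or by the particular data. Examining the expression in (\ref{eq:gradLoss}), the $(j,k)$-entry of $\partial \Loss^\Lambda/\partial W_i$ has three pieces: the term $U_i^T W (\sum_k x_kx_k^T) V_i^T$, which contributes monomials of degree $2H+1$ in the weights; the term $-U_i^T (\sum_k y_kx_k^T) V_i^T$, which contributes monomials of degree $2H$; and the linear term $(\Lambda_i \circ W_i)_{jk}$. The key observation is that in the first two terms the data matrices enter only through $A := \sum_{k=1}^m x_kx_k^T \in \RR^{d_x\times d_x}$ and $B := \sum_{k=1}^m y_kx_k^T \in \RR^{d_y\times d_x}$, and that for generic $x_1,\dots,x_m$ (and $y_1,\dots,y_m$) these two matrices have all entries nonzero, regardless of $m \ge 1$. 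First I would make this precise: the set of $(X,Y)$ for which some entry of $A$ or $B$ vanishes is a proper subvariety of $(\RR^{d_x\times m})\times(\RR^{d_y\times m})$, since each entry of $A$ or $B$ is a nonzero polynomial in the data (e.g.\ the $(p,q)$ entry of $A$ is $\sum_k x_{pk}x_{qk}$, which is not identically zero), so its complement is dense; hence generically $A$ and $B$ are entrywise nonzero.

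Next I would argue that once $A$ and $B$ are entrywise nonzero, the monomial support of each entry of (\ref{eq:gradLoss}) depends only on $H$, $d_x$, $d_y$, $d_1,\dots,d_H$. Write out $\bigl(U_i^T W A V_i^T\bigr)_{jk}$ as a sum over all index paths through the layers: each monomial is a product of one weight entry from every layer matrix, with the coefficient being a product of entries of $A$ (and nothing else). Because $A$ has no zero entries, no cancellation of monomials occurs across different paths that would remove a monomial from the support; two distinct index paths give the same monomial in the weights only if they genuinely yield the same product of weight variables, and in that case their $A$-coefficients are both nonzero and of the same sign, so they add rather than cancel. The same reasoning applies verbatim to $\bigl(U_i^T B V_i^T\bigr)_{jk}$, whose monomials have degree $2H$ and coefficients that are products of entries of $B$. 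The linear term $(\Lambda_i)_{jk}(W_i)_{jk}$ is present for generic $\Lambda_i$. Thus the support of the $(j,k)$ gradient polynomial is exactly $\{\text{degree-}(2H{+}1)\text{ path monomials through }W\}\cup\{\text{degree-}2H\text{ path monomials through }U_i,V_i\}\cup\{(W_i)_{jk}\}$, an explicitly $m$-independent set.

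Finally I would conclude: since the Newton polytopes of the gradient polynomials are determined by their supports, and the BKK bound (in $(\CC^*)^N$ via mixed volumes of these Newton polytopes, and in $\CC^N$ via the modified bound of \cite{LW96, RW96} using the faces of these polytopes) depends only on those polytopes, the bound is identical for all $m$ once we restrict to the generic, dense set of data for which $A$ and $B$ are entrywise nonzero. I expect the main obstacle to be the bookkeeping in the middle step: one must verify carefully that distinct summation paths contributing to a given entry of $U_i^T W A V_i^T$ cannot conspire — via the signs or magnitudes of the $A$-entries — to cancel a monomial. This is where entrywise nonvanishing of $A$ and $B$ is used, and it is cleanest to phrase it as: the coefficient of any fixed weight-monomial in the gradient polynomial is itself a polynomial in the entries of $A$ and $B$ that is not identically zero, hence nonzero for generic data; this handles all $m$ uniformly and sidesteps a sign analysis. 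A small additional check is that the highest-degree part is nonzero so that the degree $2H+1$ part of the support is actually attained, which again follows from genericity of $A$.
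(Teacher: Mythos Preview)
Your approach matches the paper's, and your refined formulation at the end is the right move. One slip: the term $-U_i^T(\sum_k y_kx_k^T)V_i^T$ has degree $H$ in the weights, not $2H$, since $U_i$ and $V_i$ together contain exactly $H$ weight matrices; this is harmless for the argument.

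The one genuine gap is in the inference ``the coefficient is a nonzero polynomial in the entries of $A$ and $B$, hence nonzero for generic data.'' When $m<d_x$ the image of $(x_1,\dots,x_m)\mapsto A=\sum_k x_kx_k^T$ lies in the proper subvariety of symmetric matrices of rank $\leq m$, so an arbitrary nonzero polynomial in the entries of $A$ can certainly vanish identically on that image (any $(m{+}1)\times(m{+}1)$ minor does). What rescues the argument---and what the paper states explicitly---is that these coefficients are \emph{linear} in the entries of $A$ (respectively $B$), since the data matrix appears exactly once in $U_i^T W A V_i^T$ (respectively $U_i^T B V_i^T$). A nonzero linear form cannot vanish on all symmetric matrices of rank $\leq m$ for any $m\geq 1$, because rank-one matrices $xx^T$ already span the full space of symmetric matrices; equivalently, the determinantal variety is not contained in any hyperplane. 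The same holds for $B$ and rank-$\leq m$ rectangular matrices. Your earlier ``same sign'' heuristic is also not safe over $\RR$: off-diagonal entries of $A=\sum_k x_kx_k^T$ can be negative, so a nonnegative-integer combination of entries of an entrywise nonzero $A$ can still vanish. You were right to drop it, but the replacement needs the linearity observation to close.
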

\begin{proof} First, we replace in (\ref{eq:gradLoss})
the matrix $\sum_{k=1}^m x_kx_k^T$ with the $d_x \times d_x$
symmetric matrix of indeterminates $Z = (z_{ij})$, and the 
matrix $\sum_{k=1}^m y_kx_k^T$ with the $d_y \times d_x$ matrix of indeterminates $T = (t_{ij})$. Multiplying out
the matrices in (\ref{eq:gradLoss}) given polynomials in the entries of $W_1, \ldots, W_{H+1}$ with coefficients in $z_{ij}$ and $t_{ij}$. These polynomials have terms only in degrees $2H+1$, $H$, and $1$. The terms of degree $2H+1$ that actually appear in a polynomial have coefficients that are linear in the entries of $Z$, and the terms of degree $H$ that appear
have coefficients that are linear in the entries of $T$. 
Finally, the single monomial of degree one that appears in a polynomial has an entry of some $\Lambda_i$
as a coefficient. We claim that for generic $x_1, \ldots, x_m$ and $y_1, \ldots, y_m$ the linear coefficient polynomials in $Z$ or $T$ do not vanish. This follows from the fact that determinantal varieties of $d_x \times d_x$ symmetric matrices and determinantal varieties of $d_y\times d_x$ matrices of rank $\leq r$ are not contained in linear subspaces. Genericity of $\Lambda_i$ guarantees that the 
required monomial of degree one is also present. 
\end{proof}

\section{Networks with one hidden layer trained on one data point}	
\label{sec3}
This section provides evidence that, by carefully looking at the gradient equations, one can gain insight into the number of critical points of the regularized loss function. 
We will treat linear neural networks with one hidden layer trained on a single data point ($H=1$ and $m=1$) with no restrictions on the dimensions of the input and output vectors ($d_x =n $ and $d_y = p$) as well as on the number of neurons in the hidden layer ($d_1 = d$). Although this may be viewed unrealistic for real world applications it is meant to be a point of departure for more work from symbolic and numerical algebraic geometry. 

To simplify the exposition we let $A=W_1$ and $B=W_2$. 
We also use $\Lambda= \Lambda_1$ and $\Sigma = \Lambda_2$.
If we declare $S_i = \sum_{i=1}^n a_{ij}x_j$ for $i=1, \ldots, d$ and $R_k = \sum_{i=1}^d b_{ki}S_i - y_k$ for $k=1, \ldots, p$, then 
\begin{equation*}\label{e_I}
    Wxx^T - yx^T = 
    \begin{pmatrix}
         R_1x_1 & \cdots & R_1 x_n\\
          \vdots & & \vdots \\
         R_p x_1 & \cdots & R_p x_n\\
    \end{pmatrix}.
\end{equation*}
Now $B^T(Wxx^T - yx^T) + \Lambda \circ A = 0$ yields
\begin{equation}\label{e_a}
    \begin{split}
        \left(\sum_{k=1}^p b_{k1}R_k \right) x_j &= -\lambda_{1j}a_{1j}\ \ \ \ \ \ j = 1, \ldots, n\\
        \vdots&\\
        \left( \sum_{k=1}^p b_{kd}R_k \right) x_j &= -\lambda_{dj}a_{dj}\ \ \ \ \ \ j = 1, \ldots, n
    \end{split}
\end{equation}
Similarly, $(Wxx^T - yx^T)A^T + \Sigma \circ B = 0$ gives rise to 
\begin{equation}\label{e_b}
   R_k S_i = -\sigma_{ki}b_{ki} \ \ \ \ i=1, \ldots, d \ \ \ \ k=1, \ldots, p. 
\end{equation}
\begin{lemma} For generic input and output vector $x$ and $y$ and generic regularization matrices $\Lambda$ and $\Sigma$ we obtain
\begin{equation}\label{e_a'}
    \begin{split}
        a_{1j} &= \left(
        \frac{x_j}{\lambda_{1j}}\frac{\lambda_{11}}{x_1}
        \right) a_{11}\ \ \ \ \ \ j = 1, \ldots, n\\
        &\ \vdots\\
        a_{dj} &= \left(
        \frac{x_j}{\lambda_{dj}}\frac{\lambda_{d1}}{x_1}
        \right) a_{d1}\ \ \ \ \ \ j = 1, \ldots, n\\
    \end{split}
\end{equation}
and 
\begin{equation}\label{e_A'}
        S_i = \left( \sum_{j=1}^n \frac{x_j^2}{\lambda_{ij}}\right)\frac{\lambda_{i1}}{x_1}a_{i1}\ \ \ \ i = 1, \ldots, d
\end{equation}
\end{lemma}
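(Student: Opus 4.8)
The plan is to exploit a common-factor structure hidden in the block system (\ref{e_a}). Fix a row index $i \in \{1,\dots,d\}$. All $n$ equations in the $i$-th block of (\ref{e_a}) have left-hand sides that are scalar multiples of the single quantity $C_i := \sum_{k=1}^p b_{ki} R_k$; only the factor $x_j$ and the coefficient $\lambda_{ij}$ on the right change with $j$. So the $i$-th block is simply $C_i x_j = -\lambda_{ij} a_{ij}$ for $j = 1, \dots, n$, a rank-one relation that will let me express every $a_{ij}$ in terms of $a_{i1}$ alone.

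First I would isolate the genericity hypotheses actually needed: for generic $x$ every coordinate $x_j$ is nonzero, and for a generic regularization matrix $\Lambda$ every entry $\lambda_{ij}$ is nonzero, both being nonempty Zariski-open conditions (genericity of $y$ and of $\Sigma$ plays no role in this particular lemma and is stated only for uniformity with the arguments that follow). Under these hypotheses, dividing the $j$-th equation of the $i$-th block by $\lambda_{ij}$ gives $a_{ij} = -C_i x_j / \lambda_{ij}$; specializing to $j = 1$ and using $x_1 \neq 0$ yields $C_i = -\lambda_{i1} a_{i1} / x_1$. Substituting this back produces exactly (\ref{e_a'}), namely $a_{ij} = \bigl( \tfrac{x_j}{\lambda_{ij}} \tfrac{\lambda_{i1}}{x_1} \bigr) a_{i1}$; I would also note that the degenerate case $a_{i1}=0$ forces $C_i = 0$ and hence $a_{ij}=0$ for all $j$, so the identity still holds. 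For (\ref{e_A'}) I would then substitute (\ref{e_a'}) into the definition $S_i = \sum_{j=1}^n a_{ij} x_j$, pull the $j$-independent factor $\tfrac{\lambda_{i1}}{x_1} a_{i1}$ out of the sum, and collect $x_j \cdot \tfrac{x_j}{\lambda_{ij}} = \tfrac{x_j^2}{\lambda_{ij}}$, giving $S_i = \bigl( \sum_{j=1}^n \tfrac{x_j^2}{\lambda_{ij}} \bigr) \tfrac{\lambda_{i1}}{x_1} a_{i1}$.

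There is no serious obstacle here: the whole content of the lemma is the observation that (\ref{e_a}) decouples row by row into a rank-one condition, so that within each row all weights $a_{ij}$ are determined by the single weight $a_{i1}$, and correspondingly each hidden-layer value $S_i$ is a scalar multiple of $a_{i1}$. The only points requiring care are (i) pinning down the precise Zariski-open conditions on the data and on $\Lambda$ that make the divisions by $\lambda_{ij}$ and $x_1$ legitimate, and (ii) checking consistency in the degenerate case $a_{i1}=0$. The payoff, which drives the rest of Section~\ref{sec3}, is that the gradient system can now be rewritten using only the $d$ variables $a_{11},\dots,a_{d1}$ together with the entries of $B$.
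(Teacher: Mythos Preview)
Your proposal is correct and follows essentially the same approach as the paper: both arguments observe that in the $i$-th block of (\ref{e_a}) the left-hand side is $C_i x_j$ with $C_i=\sum_{k=1}^p b_{ki}R_k$ independent of $j$, so that $\lambda_{ij}a_{ij}/x_j$ is constant in $j$, and then substitute the resulting expression for $a_{ij}$ into $S_i$. Your version is in fact slightly more explicit about the genericity hypotheses needed and about the degenerate case $a_{i1}=0$, but the underlying idea is identical to the paper's.
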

\begin{proof}
For generic data, the equation $B^T(Wxx^T - yx^T) + \Lambda \circ A = 0$ implies that $\Lambda \circ A$ is a matrix of rank at most one since $xx^T$ and $yx^T$ are matrices of rank one. The identities in (\ref{e_a'}) are an explicit version of this observation. They follow from the fact that $\left(\sum_{k=1}^p b_{k1}R_k \right)$, i.e. the coefficient of $x_j$ in the first row of (\ref{e_a}), is a constant complex number that does not depend on $j$. Therefore $\frac{-\lambda_{1j}a_{1j}}{x_j}$ is a constant. Hence, we have $\frac{-\lambda_{11}a_{11}}{x_1} = \frac{-\lambda_{12}a_{12}}{x_2} = \cdots = \frac{-\lambda_{1n}a_{1n}}{x_n}$. Equivalently, $a_{1j} = \left( \frac{x_j}{\lambda_{1j}}\frac{\lambda_{11}}{x_1} \right) a_{11}$ for all $j = 1, \ldots, n$. Repeating the same for all equations in (\ref{e_a}) gives (\ref{e_a'}). Substituting these into $S_i = \sum_{i=1}^n a_{ij}x_j$ results in (\ref{e_A'}).
\end{proof}
The next result is the main theorem of this section.
\begin{theorem}\label{thm:solutions_in_C_star}
Consider a linear network where $H=1$, $m=1$, $d_x=n$, $d_y=p$ and $d_1 = d$ with generic input/output vectors and generic regularization matrices. Then, there are at most
\begin{equation*}
    \mathcal{B}_{\mathbb{C}^*} = (4p)^d
\end{equation*} solutions of (\ref{eq:gradLoss}) for which $a_{11}, \ldots, a_{dn} \in \mathbb{C}^*$.
\end{theorem}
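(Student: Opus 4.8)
The plan is to use the preceding Lemma to eliminate every variable except $a_{11},\dots,a_{d1}$ and reduce the count to a polynomial system in $d$ variables. By the Lemma, for generic data a critical point with all $a_{ij}\in\mathbb{C}^*$ is determined by $a_{11},\dots,a_{d1}$: equation (\ref{e_a'}) writes each $a_{ij}$ as a nonzero scalar multiple of $a_{i1}$, and (\ref{e_A'}) gives $S_i=c_i a_{i1}$ with $c_i=\bigl(\sum_j x_j^2/\lambda_{ij}\bigr)\lambda_{i1}/x_1\neq 0$. Next I would feed (\ref{e_b}) back into $R_k=\sum_i b_{ki}S_i-y_k$: since $b_{ki}=-R_kS_i/\sigma_{ki}$, this gives $R_k\bigl(1+\sum_i S_i^2/\sigma_{ki}\bigr)=-y_k$, so for generic $y$ the bracket cannot vanish at a critical point and $R_k=-y_k/\bigl(1+\sum_i S_i^2/\sigma_{ki}\bigr)$; then every $b_{ki}=-R_kS_i/\sigma_{ki}$ is an explicit function of $S_1,\dots,S_d$, hence of $a_{11},\dots,a_{d1}$.

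At this stage all of (\ref{e_b}) and the content of (\ref{e_a}) captured by (\ref{e_a'}) have been used, and the only surviving equations are the $j=1$ instances of (\ref{e_a}), namely $\sum_{k=1}^p b_{ki}R_k=-\lambda_{i1}a_{i1}/x_1$ for $i=1,\dots,d$. Substituting $b_{ki}=-R_kS_i/\sigma_{ki}$ and $a_{i1}=S_i/c_i$ and dividing through by $S_i$ (nonzero in the regime being counted) reduces these to
\begin{equation*}
\sum_{k=1}^{p}\frac{R_k^2}{\sigma_{ki}}=e_i,\qquad e_i:=\frac{1}{\sum_{j=1}^n x_j^2/\lambda_{ij}},\qquad i=1,\dots,d .
\end{equation*}
These depend on $S_i$ only through $u_i:=S_i^2$, so I would pass to the system in $u_1,\dots,u_d$
\begin{equation*}
\sum_{k=1}^{p}\frac{y_k^2}{\sigma_{ki}\,\bigl(1+\sum_{l=1}^{d}u_l/\sigma_{kl}\bigr)^{2}}=e_i,\qquad i=1,\dots,d .
\end{equation*}
The critical points we want correspond $2^d$-to-one to the solutions of this system with every $u_i\neq 0$: each such $u$ lifts via the $2^d$ sign choices $S_i=\pm\sqrt{u_i}$ (all nonzero), after which $a_{i1},a_{ij},R_k,b_{ki}$ are forced, and conversely $u_i$ is recovered from a critical point as $\bigl(\sum_j a_{ij}x_j\bigr)^2$; a direct check using the $u$-equations confirms that each such lift satisfies (\ref{eq:gradLoss}).

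Finally, multiplying the $i$-th equation of the $u$-system by $\prod_{k=1}^p L_k(u)^2$, where $L_k(u)=1+\sum_l u_l/\sigma_{kl}$, produces $d$ polynomials in $u_1,\dots,u_d$ of degree at most $2p$ (the term $e_i\prod_k L_k^2$ has degree $2p$ and every other term has degree $2(p-1)$). Hence the number of isolated solutions of this cleared system is at most $(2p)^d$ by Bézout's theorem, and multiplying by the $2^d$ sign choices gives $2^d(2p)^d=(4p)^d$. The main obstacle is making the genericity bookkeeping airtight: I must record exactly which quantities ($x_j$, $\lambda_{ij}$, $\sigma_{ki}$, $c_i$, $e_i$, the brackets $1+\sum_i S_i^2/\sigma_{ki}$ at critical points, the top forms of the cleared polynomials) need to be nonzero and verify that generic data achieves this; and, since clearing denominators can create positive-dimensional components (for instance along $\{L_k=0\}$ at infinity), I must justify that the $u$-solutions of interest are isolated points of the cleared system. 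The latter follows from \cite{MCTH22}: for generic regularization the critical set is finite, so its image in $u$-space is finite, and as it lies in the open locus where all $L_k\neq 0$ — on which the cleared and the original $u$-system agree — it consists of isolated points of the cleared system.
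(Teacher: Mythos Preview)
Your argument is correct and reaches the same bound as the paper, but via a somewhat different and arguably tidier route. Both approaches eliminate the $b_{ki}$ and the redundant $a_{ij}$ to arrive at the same rational system $\sum_k y_k^2/(\sigma_{ki}(1+T_k)^2)=e_i$ in the $d$ variables $a_{11},\dots,a_{d1}$ (equivalently $S_1,\dots,S_d$). From there the paper clears denominators directly in these variables: since each $T_k$ is quadratic in the $S_i$, the cleared equations have degree $4p$, and the BKK bound for the dilated simplex $(4p)\Delta_d$ gives $(4p)^d$; to cope with possible positive-dimensional components the paper introduces a second layer of regularization and a limiting argument. You instead exploit the sign symmetry $S_i\mapsto -S_i$: passing to $u_i=S_i^2$ halves the degrees to $2p$, a plain B\'ezout count yields at most $(2p)^d$ isolated solutions in $u$-space, and the $2^d$-to-one lift recovers $(4p)^d$. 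Your treatment of positive-dimensional loci is also different and more direct: rather than re-regularizing, you invoke the finiteness of the critical set from \cite{MCTH22} and note that on the open set where all $L_k\neq 0$ the cleared and rational $u$-systems coincide, so a non-isolated $u$-image would lift to infinitely many critical points. Both routes are valid; yours avoids mixed volumes and the auxiliary regularization at the price of importing the finiteness statement, while the paper's argument is more self-contained in that respect.
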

\begin{proof}
The overall strategy for this proof is to eliminate $b_{ki}$ using (\ref{e_b}), and then substitute the expressions for the $b_{ki}$ variables so obtained into (\ref{e_a}). That will give us a polynomial system in just the $a_{ij}$ variables. We will then compute the BKK bound for this "reduced" system. That will give us an upper bound on the number of solutions in $(\mathbb{C}^*)^{d \times n}$.

We first express $b_{k2}$, \ldots, $b_{kd}$ in terms of $b_{k1}$. By the equations in (\ref{e_A'}) each $S_i$ is equal to $a_{i1}$ scaled by some constant. Since each $a_{ij}$ is non-zero by assumption, each $S_i$ is non-zero. Then (\ref{e_b}) implies
\begin{equation*}
    \frac{b_{k1} \sigma_{k1}}{S_1} = \frac{b_{k2} \sigma_{k2}}{S_2} =\cdots = \frac{b_{kd} \sigma_{kd}}{S_d}.
\end{equation*}
Hence, $b_{k2} = b_{k1}\left(\frac{\sigma_{k1}}{\sigma_{k2}} \frac{S_2}{S_1}\right)$, \ldots, $b_{kd} = b_{k1} \left(\frac{\sigma_{k1}}{\sigma_{k,d}} \frac{S_d}{S_1}\right)$. Substituting the preceding expressions for $b_{k2}$ through $b_{kd}$ into $R_k$, we get:
\begin{equation*}
\begin{split}
    R_k &= b_{k1} \left[ S_1 + \left(\frac{\sigma_{k1}}{\sigma_{k2}} \frac{S_2^2}{S_1}\right)  + \cdots + \left(\frac{\sigma_{k1}}{\sigma_{kd}} \frac{S_d^2}{S_1}\right) \right] - y_k.
\end{split}
\end{equation*}
Using this expression in (\ref{e_b}) and with a bit of algebra we obtain
\begin{equation*}
\begin{split}
    b_{ki} &= \frac{y_k S_i}{\sigma_{ki}} \frac{1}{\left(1 + T_k\right)}
\end{split}
\end{equation*}
where $T_k = \frac{S_1^2}{\sigma_{k1}} + \frac{S_2^2}{\sigma_{k2}} + \cdots +\frac{S_d^2}{\sigma_{kd}}$. Note
that the right hand side of this identity is a rational 
function of $a_{ij}$ only. We substitute these expressions for $b_{ki}$ into the first equation of (\ref{e_a}) to result in
\begin{equation*}
    \begin{split}
        -\frac{\lambda_{1j}a_{1j}}{x_j} &= -\sum_{k=1}^p \frac{y_k^2 S_1}{\sigma_{k1}(1+T_k)^2}.
    \end{split}
\end{equation*}
Now, (\ref{e_a'}) implies $-\frac{\lambda_{1j}a_{1j}}{x_j} = -\frac{\lambda_{11}a_{11}}{x_1}$ and using (\ref{e_A'}) for $S_1$, the above equation gives 
\begin{equation*}\label{e_5.1}
    \frac{1}{\left(\sum_{j=1}^n \frac{x_j^2}{\lambda_{1j}}\right)} - \left[ \frac{y_1^2}{\sigma_{11}(1+T_1)^2} + \frac{y_2^2}{\sigma_{21}(1+T_2)^2} +
      \cdots  +
        \frac{y_p^2}{\sigma_{p1}(1+T_p)^2}
        \right] = 0.
\end{equation*}
Clearing denominators we arrive at 
\begin{equation}\label{e_6.1}
    \begin{split}
        \frac{(1+T_1)^2(1+T_2)^2\cdot\cdot\cdot(1 + T_p)^2}{\left(\sum_{j=1}^n \frac{x_j^2}{\lambda_{1j}}\right)} & - \sum_{k=1}^p \frac{y_k^2}{\sigma_{k1}} \prod_{i\neq k}^p (1+T_i)^2  = 0.
    \end{split}
\end{equation}
Each $T_k$ is a polynomial in the variables $a_{11}, \ldots, a_{d1}$. Hence, (\ref{e_6.1}) is a polynomial equation in these variables only. A total of $d$ such polynomials comprises a system of $d$ polynomial equations in $d$ variables. The solution set contains the solutions
to (\ref{eq:gradLoss}) when $a_{ij} \in \CC^*$, but it also 
contains components with positive dimension, for instance, 
the codimension two algebraic sets given by $1+T_i = 1+T_j = 0$ for $i \neq j$. We proceed to introduce regularization for this system as well. To equation $i = 1, \ldots, d$ we add the regularization term $\mu_i a_{di}$ where $\mu_i$ is a regularization parameter. A standard argument using the generalized Sard's theorem and the implicit function theorem guarantees that, for almost all choices of $\mu_i$, $i=1,\ldots, d$, all solutions to the regularized system are isolated and nondegenerate, and for sufficiently small $\mu_i$, as they shrink to $0$ uniformly, the solutions to the regularized system either converge to components of the solution set of the non-regularized system (including the solutions we want to count) or diverge to infinity. For an excellent discussion of regularization in the context of solving systems of polynomials we refer to Section 3.2 in \cite{MCTH22}. 
The Newton polytope of (\ref{e_6.1}) and its regularization is $(4p)\Delta_d = \mathrm{conv}\{0, 4p \cdot e_1, ..., 4p \cdot e_d\}$ where $e_i$ is the $i$-th standard basis vector of $\RR^d$. Thus, we obtain a zero-dimensional polynomial system with $d$ equations in $d$ variables where 
the Newton polytope of each equation is $(4p)\Delta_d$.
Therefore the BKK bound for this system is given by
the normalized volume of $(4p)\Delta_d$ which is equal to $(4p)^d$. This completes the proof.
\end{proof}
\begin{remark}
Theorem \ref{thm:solutions_in_C_star} gives a bound 
on the number of critical points of the regularized loss function when the weights in $A=W_1$ are all nonzero. In the following section we will relax this requirement after treating patterns of zeros for the $H=1$ and $m=1$ case. 
We also note that in this theorem we have not made any 
assumptions about the weights in $B=W_2$.
\end{remark}
\begin{remark} It is an open question to find a reduced system for $m>1$ even when $H=1$ that will yield a better 
bound on the number of critical points. In particular, is there a systematic way to eliminate the weights in $B=W_2$?
\end{remark}

\section{Patterns of Zeros}
\label{sec4}

This section is about patterns we observe in the critical points of the regularized loss function of a deep linear neural network. All of our results are about networks trained on a single data point ($m=1$). However, we believe the phenomena we observe are more general and persist for arbitrary $m$. To support this claim we provide computational evidence. We start with the observation that if in a critical point an entry of a weight matrix is zero then 
the entire row or column of that entry must also consist of zeros. 
\begin{proposition} \label{prop:no-stray-zeros}
Suppose an arbitrary deep linear neural network is trained on
a single generic data point $x$ with the corresponding generic output vector $y$. Then, if in a critical point of $\Loss^\Lambda$ the $(i,j)$ entry of a weight matrix is zero, then either the $i$th row or the $j$th column of the same weight matrix is zero. 
\end{proposition}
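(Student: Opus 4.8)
The plan is to specialize the gradient system (\ref{eq:gradLoss}) to $m=1$ and extract the consequences of the rank-one structure that appears there. Fix the weight matrix, say $W_\ell$, whose $(i,j)$ entry is claimed to be zero, and write $e := Wx - y$ for the error vector. When $m=1$ the ``data'' factor in (\ref{eq:gradLoss}) is
$$Wxx^T - yx^T \;=\; e\,x^T,$$
a rank-one matrix. Since multiplying on the left and on the right cannot increase rank, the $\ell$-th block of (\ref{eq:gradLoss}) takes the form
$$U_\ell^T\bigl(Wxx^T - yx^T\bigr)V_\ell^T + \Lambda_\ell\circ W_\ell \;=\; u\,v^T + \Lambda_\ell\circ W_\ell \;=\; 0,$$
where $u := U_\ell^T e$ and $v := V_\ell x$ are column vectors whose lengths match, respectively, the number of rows and the number of columns of $W_\ell$, so that $uv^T$ has the same shape as $W_\ell$. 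Reading this off entrywise gives $u_i v_j + (\Lambda_\ell)_{ij}(W_\ell)_{ij} = 0$ for every pair of indices $i,j$.

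The next step uses genericity of the regularization matrices: each entry of $\Lambda_\ell$ is nonzero, so the relation above is equivalent to $(W_\ell)_{ij} = -\,u_i v_j/(\Lambda_\ell)_{ij}$. In particular, if the $(i,j)$ entry of $W_\ell$ vanishes then $u_i v_j = 0$, hence $u_i = 0$ or $v_j = 0$. If $u_i = 0$, then $(W_\ell)_{ij'} = -\,u_i v_{j'}/(\Lambda_\ell)_{ij'} = 0$ for every column index $j'$, so the $i$th row of $W_\ell$ is zero; if instead $v_j = 0$, the symmetric computation shows the $j$th column of $W_\ell$ is zero. This is precisely the dichotomy asserted.

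There is essentially no analytic obstacle in this argument; its entire content is the observation that for a single data point the residual $Wxx^T - yx^T$ is an outer product $ex^T$, a property that survives the linear maps $U_\ell^T(-)V_\ell^T$ and forces each entry of $\Lambda_\ell\circ W_\ell$ to be a pure product $u_i v_j$. The two points that merit care are the dimensional bookkeeping needed to see that $uv^T$ lands in the space of matrices of the shape of $W_\ell$, and the appeal to genericity of $\Lambda_\ell$ so that one may divide by its entries; notably, beyond placing us in the stated setting, no genericity of $x$ or $y$ is actually needed for this particular statement. I anticipate that tracking this same rank-one factorization across two consecutive layers, through the weight matrix they share, is what will yield the finer statement that row $k$ of $W_i$ vanishes if and only if column $k$ of $W_{i+1}$ does.
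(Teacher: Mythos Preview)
Your proof is correct and is essentially identical to the paper's argument: your vectors $u=U_\ell^Te$ and $v=V_\ell x$ are exactly the quantities the paper calls $D_i$ and $E_j$, and the conclusion follows from $u_iv_j=0$ in the same way. Your side remark that only genericity of $\Lambda_\ell$ (not of $x,y$) is actually used here is a valid sharpening of the hypotheses.
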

\begin{proof}
We let $d_x = n$ and $d_y = p$. Suppose the $(i,j)$ entry of $Z=W_k \in \CC^{r \times s}$ is zero. 
Then 
\begin{equation*}
\frac{\partial \Loss^\Lambda}{\partial Z} = U_Z^T\left(W xx^T  - yx^T \right)V_Z^T + \Lambda  \circ Z = 0
\end{equation*}
where $U_Z = U_k$, $V_Z = V_k$ and $\Lambda=\Lambda_k$ as in
(\ref{eq:gradLoss}). It follows that 
\begin{equation}\label{eq:no-stray-zeros}
\begin{pmatrix}
         D_1E_1 & \cdots & D_1 E_s\\
          \vdots & & \vdots \\
         D_r E_1 & \cdots & D_r E_s\\
    \end{pmatrix}   =
 -\begin{pmatrix}
         \lambda_{11} z_{11} & \cdots & \lambda_{1s} z_{1s}\\
          \vdots & & \vdots \\
         \lambda_{r1} z_{r1} & \cdots & \lambda_{rs} z_{rs}\\
    \end{pmatrix}   
\end{equation}
where
\begin{equation*}
\begin{split}
        D_i &= (U_Z^T)_{i1} R_1 + \cdots + (U_Z^T)_{ip} R_p\\
        E_j &=  x_1 (V_Z^T)_{1j} + \cdots + x_n (V_Z^T)_{nj}\\
    \end{split}
\end{equation*}
and $R_k = (Wx - y)_k$. If $z_{ij} = 0$ then $D_i=0$ or $E_j=0$
and the result follows. 
\end{proof}
It is worth noting that if row $i$ of the weight matrix $W_k$ is $0$, it means that the $i$th neuron on layer $k$ can be removed, and the column $i$ of $W_{k+1}$ can be deleted. 
Conversely, if column $j$ of the weight matrix $W_k$ is $0$,
then this column can be deleted and the $j$th neuron on layer 
$k-1$ can be removed.
These statements have the following stronger counterpart. 
\begin{theorem} \label{thm:row-zero-column-zero}
Suppose an arbitrary deep linear neural network is trained on
a single generic data point $x$ with the corresponding generic output vector $y$. Then in a critical point of $\Loss^\Lambda$ the row $i$ of $W_{k-1}$ is zero if and only if  the $i$th column 
of $W_{k}$ is zero.
\end{theorem}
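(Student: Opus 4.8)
The plan is to write the gradient equations for the two consecutive weight matrices $W_{k-1}$ and $W_k$ in the factored form already used in the proof of Proposition~\ref{prop:no-stray-zeros}, and then to exploit two trivial ``chain rule'' identities between the $U$- and $V$-products at consecutive indices. Throughout, the hypothesis $m=1$ is essential: it makes the residual matrix $Wxx^T-yx^T$ equal to the rank-one product $Rx^T$ with $R=Wx-y\in\CC^{d_y}$, so that for each weight matrix $W_j$ the equation $\partial\Loss^\Lambda/\partial W_j=0$ becomes $(U_j^TR)(V_jx)^T+\Lambda_j\circ W_j=0$, i.e.\ entrywise
\begin{equation*}
(U_j^TR)_a\,(V_jx)_b=-(\Lambda_j)_{ab}\,(W_j)_{ab}.
\end{equation*}

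First I would fix $k$ with $2\le k\le H+1$ and a neuron index $i$ of layer $k-1$ (so that ``row $i$ of $W_{k-1}$'' and ``column $i$ of $W_k$'' both range over the $d_{k-1}$ neurons of that layer), and abbreviate $\rho:=U_k^TR$, $\pi:=U_{k-1}^TR$, $\eta:=V_{k-1}x$, $\zeta:=V_kx$. The key observation is that $U_{k-1}=U_kW_k$ and $V_k=W_{k-1}V_{k-1}$ (with the usual convention that an empty product of weight matrices is an identity matrix, which handles the boundary cases $k=H+1$ and $k=2$); transposing and multiplying by $R$, resp.\ $x$, gives $\pi=W_k^T\rho$ and $\zeta=W_{k-1}\eta$, that is $\pi_i=\sum_a(W_k)_{ai}\rho_a$ and $\zeta_i=\sum_t(W_{k-1})_{it}\eta_t$.

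The two implications then fall out. If row $i$ of $W_{k-1}$ is zero, then $\zeta_i=0$, and the $(a,i)$ equation for $W_k$ reads $(\Lambda_k)_{ai}(W_k)_{ai}=-\rho_a\zeta_i=0$ for every $a$, so column $i$ of $W_k$ is zero. Conversely, if column $i$ of $W_k$ is zero, then $\pi_i=0$, and the $(i,t)$ equation for $W_{k-1}$ reads $(\Lambda_{k-1})_{it}(W_{k-1})_{it}=-\pi_i\eta_t=0$ for every $t$, so row $i$ of $W_{k-1}$ is zero. Both steps cancel a scalar entry of a regularization matrix, which is legitimate because for generic regularization matrices those entries are nonzero; I would flag that this (rather than genericity of $x$ and $y$, which the argument does not actually use) is the only place genericity enters.

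In truth there is no serious obstacle: once the rank-one factorization of the residual is in place, everything is bookkeeping with the two identities $U_{k-1}=U_kW_k$ and $V_k=W_{k-1}V_{k-1}$. The one thing I would be careful about is precisely that factorization, since it is where $m=1$ is used: for $m>1$ the matrix $U_j^T\bigl(W\sum_\ell x_\ell x_\ell^T-\sum_\ell y_\ell x_\ell^T\bigr)V_j^T$ no longer splits as a column vector times a row vector, and a zero row or column can no longer be propagated through a single gradient equation. I would also re-check the index conventions so that the two vanishing statements genuinely refer to the same hidden neuron.
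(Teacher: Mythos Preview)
Your proof is correct and follows essentially the same route as the paper's. Both arguments write the $m=1$ gradient at $W_j$ as the rank-one identity $(U_j^TR)(V_jx)^T=-\Lambda_j\circ W_j$ and then propagate a zero row/column through the consecutive-layer relations $V_k=W_{k-1}V_{k-1}$ and $U_{k-1}=U_kW_k$; the paper phrases the first implication as ``row $i$ of $P$ zero $\Rightarrow$ row $i$ of $V_Z$ zero $\Rightarrow E_i=0$'' and the second as ``column $i$ of $Z$ zero $\Rightarrow$ row $i$ of $(U_P)^T$ zero $\Rightarrow D_i=0$'', which are exactly your computations $\zeta_i=(W_{k-1}\eta)_i=0$ and $\pi_i=(W_k^T\rho)_i=0$ in different notation. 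Your observation that only genericity of the regularization entries (not of $x,y$) is actually used is a nice sharpening that the paper does not make explicit.
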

\begin{proof} We retain our notation from the previous proof,
and let $P = W_{k-1}$ and $Z = W_k$. Then $V_Z = P\cdots W_2W_1$, and if row $i$ of $P$ is zero, then row $i$ of $V_Z$ is zero. It follows that $E_i = x_1 (V_Z^T)_{1i} + \cdots + x_n (V_Z^T)_{ni} = 0$
and from (\ref{eq:no-stray-zeros}) we conclude that the $i$th column of $Z$ is zero. Conversely, suppose the $i$th column
of $Z$ is zero.  Since $(U_P)^T = Z^T\cdots W_H^TW_{H+1}^T$, then row $i$ of $(U_P)^T$ is zero. It follows that $D_i = (U_P^T)_{i1} R_1 + \cdots + (U_P^T)_{ip} R_p  = 0$
and the result follows again from (\ref{eq:no-stray-zeros}).
\end{proof}
For the weight matrices $W_1$ and $W_{H+1}$, Proposition \ref{prop:no-stray-zeros} has stronger corollaries.
\begin{corollary} \label{cor:matrix-1}
Suppose an arbitrary deep linear neural network is trained on
a single generic data point $x$ with the corresponding generic output vector $y$. Then, if in a critical point of $\Loss^\Lambda$ the $(i,j)$ entry of $W_1$ is zero, then the $i$th row of $W_1$
must be zero.
\end{corollary}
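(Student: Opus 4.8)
The plan is to specialize the proof of Proposition~\ref{prop:no-stray-zeros} to the case $Z = W_1$, where the factor $V_Z^T$ degenerates. Recall from Section~\ref{sec2} that $V_i^T = \prod_{j=1}^{i-1} W_j^T$, so for $i=1$ this is the empty product, i.e. $V_{W_1}^T = I_{d_x}$. Feeding this into the quantities $E_j$ appearing in~(\ref{eq:no-stray-zeros}), I would compute
\begin{equation*}
E_j = x_1 (V_{W_1}^T)_{1j} + \cdots + x_n (V_{W_1}^T)_{nj} = x_j,
\end{equation*}
since $(V_{W_1}^T)_{\ell j} = \delta_{\ell j}$. For a generic data point $x$ every coordinate $x_j$ is nonzero, hence $E_j \neq 0$ for all $j$.

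Now suppose the $(i,j)$ entry $z_{ij}$ of $W_1$ is zero at a critical point. The identity~(\ref{eq:no-stray-zeros}) gives $D_i E_j = -\lambda_{ij} z_{ij} = 0$, and since $E_j = x_j \neq 0$ we conclude $D_i = 0$. But then, reading the $i$th row of~(\ref{eq:no-stray-zeros}), we get $D_i E_\ell = -\lambda_{i\ell} z_{i\ell} = 0$ for every column index $\ell$; by genericity of $\Lambda_1$ each $\lambda_{i\ell}$ is nonzero, so $z_{i\ell} = 0$ for all $\ell$. Thus the entire $i$th row of $W_1$ vanishes, which is the claim.

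There is essentially no serious obstacle here: the corollary is the observation that, for the very first weight matrix, the ``column is zero'' alternative in Proposition~\ref{prop:no-stray-zeros} cannot occur because $V_{W_1}^T$ is the identity and $x$ is generic. The only points requiring a little care are the empty-product convention for $V_1^T$ and making sure the genericity hypotheses on $x$ and on $\Lambda_1$ are invoked in the right places; both are already in force in the statement. (An entirely symmetric argument, using that $U_{W_{H+1}}^T$ is the empty product and hence the identity, would give the companion statement for $W_{H+1}$, replacing ``row'' by ``column''.)
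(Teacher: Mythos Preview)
Your proof is correct and is essentially the paper's argument: the paper's one-line proof cites the relations~(\ref{e_a'}), whose derivation is exactly the observation that for $W_1$ the column-quantity reduces to $E_j = x_j \neq 0$, forcing $D_i = 0$ and hence the whole $i$th row to vanish. Your route through the general framework of Proposition~\ref{prop:no-stray-zeros} is, if anything, slightly cleaner, since~(\ref{e_a'}) was written down only in the $H=1$ setting while the corollary is stated for arbitrary~$H$.
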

\begin{proof}
This follows from equations (\ref{e_a'}).
\end{proof}
\begin{corollary} \label{cor:matrix-H+1}
Suppose an arbitrary deep linear neural network  is trained on a single generic data point $x$ with the corresponding generic output vector $y$. Then, if in a critical point of $\Loss^\Lambda$ the $(i,j)$ entry of $W_{H+1}$ is zero 
 then the $j$th column  of $W_{H+1}$ must be zero.
\end{corollary}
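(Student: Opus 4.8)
The plan is to specialize the matrix computation carried out in the proof of Proposition~\ref{prop:no-stray-zeros} to the output layer $Z = W_{H+1}$ and to exploit that its associated factor $U_Z$ is an empty product. Writing $p = d_y$, we have that $U_{H+1}^T$ is the $p \times p$ identity matrix, so in the notation of that proof the scalars $D_i = (U_Z^T)_{i1}R_1 + \cdots + (U_Z^T)_{ip}R_p$ collapse to $D_i = R_i = (Wx - y)_i$, and equation~(\ref{eq:no-stray-zeros}) becomes $R_i E_j = -\lambda_{ij} z_{ij}$ for all $i \in \{1,\dots,p\}$ and $j \in \{1,\dots,d_H\}$, where $E_j = (W_H\cdots W_1\,x)_j$ depends only on $x$ and on $W_1,\dots,W_H$ (and not on $W_{H+1}$). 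This rewriting is the whole content of the first step.

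The first substantive step after that is to show $R_i \neq 0$ for every $i$. If $R_i = 0$, then $\lambda_{ij} z_{ij} = 0$ for all $j$, and by genericity of $\Lambda_{H+1}$ (so every $\lambda_{ij} \neq 0$) the entire $i$th row of $W_{H+1}$ vanishes; but then the $i$th row of $W = W_{H+1}W_H\cdots W_1$ is zero, forcing $R_i = (Wx - y)_i = -y_i$, which is nonzero for generic $y$ --- a contradiction. Granting $R_i \neq 0$, suppose the $(i,j)$ entry of $W_{H+1}$ is zero. Then $R_i E_j = 0$ forces $E_j = 0$, and substituting $E_j = 0$ back into the relation above for that same $j$ and every row index $i'$ gives $\lambda_{i'j} z_{i'j} = 0$, hence $z_{i'j} = 0$ for all $i'$; that is, the $j$th column of $W_{H+1}$ is zero, which is exactly the assertion.

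I do not expect a genuine obstacle here: the argument is a two-line refinement of Proposition~\ref{prop:no-stray-zeros}. The one observation that must be spotted --- and it is the crux --- is that for the output layer the ``$i$th row is zero'' alternative offered by that proposition is self-defeating, because it contradicts $R_i \neq 0$, so the ``$j$th column is zero'' alternative is the only one that can occur. The genericity hypotheses on $y$ and on $\Lambda_{H+1}$ are precisely what make this work, and both are already standing assumptions. The mirror statement for the first layer, Corollary~\ref{cor:matrix-1}, follows from the same reasoning applied with $V_1$ equal to the identity and $E_j = x_j \neq 0$, so that there the surviving alternative is the row rather than the column.
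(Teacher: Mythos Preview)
Your proof is correct and follows essentially the same approach as the paper: both specialize Proposition~\ref{prop:no-stray-zeros} to $Z=W_{H+1}$, use that $U_Z$ is the identity so $D_i=R_i$, and derive a contradiction from $R_i=0$ via the forced vanishing of row~$i$ of $W$ and the genericity of $y$. The only cosmetic difference is ordering---you first establish $R_i\neq 0$ for all $i$ and then conclude $E_j=0$, whereas the paper argues by contradiction on the column being nonzero---but the content is identical.
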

\begin{proof}
Let $Z=W_{H+1}$ and suppose $z_{ij}=0$. Then 
$D_iE_j=0$ as in the proof of Proposition \ref{prop:no-stray-zeros}. We observe that $U_Z$ is the identity matrix
and therefore $D_i = R_i = (Wx-y)_i$. If column $j$ of $Z$
is not zero, then $E_j \neq 0$ and therefore $D_i=0$. 
Hence $R_i=0$ and this means that $(Wx)_i= y_i$. But this
also means that $i$th row of $Z$ is zero, and therefore
the $i$th row of $W$ is zero. But then $y_i=0$ which contradicts the genericity of $y$. We conclude that the 
$j$th column of $Z=W_{H+1}$ must be zero. 
\end{proof}
For a network with one hidden layer trained on a single data point we gave a bound on the number of critical points in which
the entries of the first weight matrix are in $\CC^*$; see Theorem \ref{thm:solutions_in_C_star}. Now we expand this count
to all complex critical points. 
\begin{corollary} \label{cor:reduced} Consider a linear network with $H=1$, $m=1$, $d_x=n$, $d_y=p$ and $d_1 = d$ with generic input/output vectors and generic regularization matrices. Then, there are at most
\begin{equation*}
    \mathcal{B}_{\mathbb{C}} = (1+4p)^d
\end{equation*} complex solutions of (\ref{eq:gradLoss}).
\end{corollary}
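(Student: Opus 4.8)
The plan is to stratify all complex critical points according to which rows of $A=W_1$ vanish, and to show that each stratum is in bijection with the $\mathbb{C}^*$-critical points of a smaller linear network, to which Theorem \ref{thm:solutions_in_C_star} already applies.

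First I would pin down the combinatorial shape of an arbitrary critical point using the zero-pattern results of this section. By Corollary \ref{cor:matrix-1}, every row of $A$ is either identically zero or has all entries in $\mathbb{C}^*$; let $I\subseteq\{1,\ldots,d\}$ be the set of indices of the nonzero rows, and put $k=|I|$. By Theorem \ref{thm:row-zero-column-zero} (applied with $k=2$), the $i$th column of $B=W_2$ is zero precisely when $i\notin I$, and by Corollary \ref{cor:matrix-H+1} every column of $B$ is either identically zero or entrywise nonzero. Hence the support of a critical point is completely encoded by the single set $I$: the rows of $A$ indexed by $I$ and the columns of $B$ indexed by $I$ are entrywise nonzero, and all remaining entries of $A$ and $B$ are zero. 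In particular the strata indexed by distinct subsets $I$ are disjoint.

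Next I would argue that fixing a support type $I$ decouples the gradient system (\ref{eq:gradLoss}) into that of a smaller network. Setting the rows of $A$ outside $I$ and the columns of $B$ outside $I$ to zero, we have $W=BA=\sum_{i\in I}(\text{column }i\text{ of }B)(\text{row }i\text{ of }A)$, so $W$ and the error matrix $Wxx^T-yx^T$ are literally those of the linear network with the same $n$, $p$, the same data $x,y$, the submatrices of $\Lambda$ and $\Sigma$ on the coordinates indexed by $I$, and $d_1=k$ hidden neurons. The gradient equations indexed by the zeroed-out entries are automatically satisfied — they read $D_iE_j=0$ with $D_i=0$ or $E_j=0$ exactly as in the proof of Proposition \ref{prop:no-stray-zeros} — while the equations indexed by the surviving entries coincide with the gradient equations of this reduced $k$-neuron network. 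This yields a bijection between critical points of the full network with support type $I$ and critical points of the reduced network all of whose $A$-entries lie in $\mathbb{C}^*$ (by the same zero-pattern results applied to the reduced network, the latter condition forces all of its $B$-entries to be nonzero, so the lifted point really has support exactly $I$). Since the reduced data are still generic, Theorem \ref{thm:solutions_in_C_star} with $d$ replaced by $k$ bounds the number of such reduced critical points by $(4p)^k$.

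Summing over all supports then gives at most $\sum_{k=0}^{d}\binom{d}{k}(4p)^k=(1+4p)^d$ complex critical points, with the $k=0$ term accounting for the trivial critical point $A=0$, $B=0$. I expect the only real work to be in the second step: checking carefully that restricting to a fixed support type genuinely turns (\ref{eq:gradLoss}) into the reduced network's gradient system, with nothing lost and no critical point counted twice, and that the genericity hypotheses of Theorem \ref{thm:solutions_in_C_star} are inherited by every sub-network. Once that reduction is in place, the corollary follows immediately from Theorem \ref{thm:solutions_in_C_star} and the binomial theorem.
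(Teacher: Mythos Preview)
Your proposal is correct and follows essentially the same approach as the paper: stratify critical points by the set of nonzero rows of $W_1$, use the zero-pattern results to reduce each stratum to a smaller network with $k$ hidden neurons, apply Theorem \ref{thm:solutions_in_C_star} to bound that stratum by $(4p)^k$, and sum via the binomial theorem. You are simply more explicit than the paper about why the reduction is a genuine bijection and why genericity is inherited by the sub-networks, points the paper leaves implicit.
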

\begin{proof}
By the above results the possible 
patterns of zeros in critical points are completely 
determined by the rows of $W_1$ that are zero. Since 
the corresponding columns of $W_2$ must be zero we can 
reduce the neural network by removing the neurons matching
the zero rows of $W_1$. If $r$ rows of $W_1$ are zero, 
Theorem \ref{thm:solutions_in_C_star} implies that there
are at most $(4p)^{d-r}$ critical points where the remaining rows  of $W_1$ are in $\CC^*$. Therefore 
the number of all complex critical points for this network
is 
$$ \mathcal{B}_{\CC} = \sum_{r=0}^d {d \choose r} (4p)^{d-r}  = (1 + 4p)^d.$$
\end{proof}
\begin{example} Consider a linear neural network with one hidden layer trained on a single input vector
where $d_x=d_y=d_1=2$. The classic B\'ezout bound on the 
number of complex critical points is
$3^8 = 6561$. The BKK bound yields $1089$. 
According to Theorem \ref{thm:solutions_in_C_star} the 
number of critical points in $(\CC^*)^8$ is bounded by $\mathcal{B}_{\CC^*}= 64$, and by the above corollary the number of critical points in $\CC^8$ is bounded by $\mathcal{B}_{\CC} = 81$. These solutions are expected to come
in four flavors:
$$ \begin{array}{cccc}
       W_2 & W_1 & W_2 & W_1 \\
    \left(\begin{array}{cc} * & * \\ * & * \end{array} \right. &
    \left. \begin{array}{cc} * & * \\ * & * \end{array}  \right) &
    \left( \begin{array}{cc} 0 & * \\ 0 & * \end{array} \right. &
    \left. \begin{array}{cc} 0 & 0 \\ * & * \end{array} \right)\\
    \\
    \left(\begin{array}{cc} * & 0 \\ * & 0 \end{array} \right.&
    \left. \begin{array}{cc} * & * \\ 0 & 0 \end{array} \right) &
    \left( \begin{array}{cc} 0 & 0 \\ 0 & 0 \end{array} \right. &
    \left. \begin{array}{cc} 0 & 0 \\ 0 & 0 \end{array} \right) \\
\end{array}
$$
The first type with  full support corresponds to critical
points in $(\CC^*)^8$, and there are at most $64$ of them. 
The next two types have zeros in the first or second row of $W_1$, and there are at most $8$ such solutions for each type. Finally,
there is the last type which corresponds to the origin. Using {\em HomotopyContinuation.jl} we see that there are actually $33$ critical points in $\CC^8$ of which $16$ are in $(\CC^*)^8$. \hfill$ \triangle$
\end{example}
\begin{example} Not every possible pattern of zeros is realized. A simple example comes from the case with
$d_x=d_y=1$ and $d_1=2$ when $H=m=1$. Again one expects
four types of critical points, but there are no complex critical points with only nonzero coordinates. The total number of critical points is $9$. A more interesting example 
arises from the case when $H=d_x=d_y=d_1=2$ with $m=1$.
The total number of possible types of critical points is
ten. However, the following two types of solutions do not 
appear:
$$\begin{array}{ccc}
       W_3 & W_2 & W_1  \\
    \begin{array}{cc} 0 & * \\ 0 & * \end{array}  &
    \begin{array}{cc} 0 & 0 \\ * & * \end{array}   &
    \begin{array}{cc} * & * \\ * & * \end{array}  \\
    \\
    \begin{array}{cc} * & 0 \\ * & 0 \end{array}  &
    \begin{array}{cc} * & * \\ 0 & 0 \end{array}   &
    \begin{array}{cc} * & * \\ * & * \end{array}  
\end{array}$$
\hfill$ \triangle$
\end{example}
\begin{conjecture}
All our results in this section are for linear neural
networks trained on a single data point. However, 
in our computations we observe that Proposition \ref{prop:no-stray-zeros}, Theorem \ref{thm:row-zero-column-zero}, Corollary \ref{cor:matrix-1}, and Corollary \ref{cor:matrix-H+1} hold for $m=2$. We conjecture these results to be true
for any $m$.
\end{conjecture}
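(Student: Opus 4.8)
\emph{A plan of attack.} All four results we wish to extend to $m>1$ rest on a single identity: at any critical point, for every weight matrix $Z=W_k$ one has $U_Z^T\,M\,V_Z^T=-\,\Lambda_k\circ Z$ with $M:=(WX-Y)X^T$, where $X=(x_1|\cdots|x_m)$ and $Y=(y_1|\cdots|y_m)$. For $m=1$ the matrix $M=(Wx-y)x^T$ has rank one, so $U_Z^T M V_Z^T$ is an outer product $\mathbf{d}\,\mathbf{e}^T$ whose zero set is a union of full rows and full columns, and this rigidity is exactly what powers Proposition~\ref{prop:no-stray-zeros}, Theorem~\ref{thm:row-zero-column-zero}, and Corollaries~\ref{cor:matrix-1}--\ref{cor:matrix-H+1}. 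For general $m$ one keeps only $\rk(M)\le m$, since $M$ factors through $\CC^m$. The plan is to establish the generalized Proposition~\ref{prop:no-stray-zeros} and Theorem~\ref{thm:row-zero-column-zero} first and uniformly in $m$ (the analogues of the two corollaries and of Corollary~\ref{cor:reduced} then follow by the same reductions used here), by analyzing how the rank-$\le m$ structure, the genericity of $X$, $Y$ and the $\Lambda_i$, and the factorization $W=W_{H+1}\cdots W_1$ together still constrain zero patterns.

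\emph{Parameter-space set-up.} I would pass to the incidence variety $\mathcal{I}\subset\CC^N\times\mathcal{P}$, with $\mathcal{P}$ the space of parameters $(X,Y,\Lambda_1,\ldots,\Lambda_{H+1})$, cut out by the gradient equations (\ref{eq:gradLoss}); by the regularization results of \cite{MCTH22}, for generic parameters the fibre of $\mathcal{I}$ is finite and reduced, so the component of $\mathcal{I}$ dominating $\mathcal{P}$ has dimension $\dim\mathcal{P}$. For each fixed layer $k$ and index pair $(i,j)$, let $\mathcal{B}_{k,i,j}\subseteq\mathcal{I}$ be the locally closed ``bad locus'' where $(W_k)_{ij}=0$ while neither the $i$th row nor the $j$th column of $W_k$ vanishes, and define an analogous bad locus negating Theorem~\ref{thm:row-zero-column-zero}. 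The conjecture reduces to showing that none of these bad loci dominates $\mathcal{P}$; since they are constructible, it suffices to prove either a dimension bound $\dim\mathcal{B}_{k,i,j}<\dim\mathcal{P}$, or --- perhaps easier to set up but trickier to make rigorous, because critical points can escape to infinity under specialization --- that for every $m$ there is a single parameter value over which $\mathcal{B}_{k,i,j}$ is empty.

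\emph{The local obstruction.} At a putative bad critical point with $(W_k)_{ij}=0$, the $i$th row of $U_Z^T M V_Z^T$ equals $\sum_{\ell=1}^m D_i^{(\ell)}(V_Z x_\ell)^T$ and its $j$th column equals $\sum_{\ell=1}^m E_j^{(\ell)}(U_Z^T R^{(\ell)})$, where $R^{(\ell)}=W x_\ell-y_\ell$, $D_i^{(\ell)}=(U_Z^T R^{(\ell)})_i$ and $E_j^{(\ell)}=(V_Z x_\ell)_j$; vanishing of the $(i,j)$ entry then reads $\sum_{\ell=1}^m D_i^{(\ell)} E_j^{(\ell)}=0$, i.e.\ the vectors $\mathbf{d}_i=(D_i^{(\ell)})_\ell$ and $\mathbf{e}_j=(E_j^{(\ell)})_\ell$ in $\CC^m$ are orthogonal for the standard bilinear form. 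When $m=1$ this forces $\mathbf{d}_i=0$ (so row $i$ is zero) or $\mathbf{e}_j=0$ (so column $j$ is zero), recovering Proposition~\ref{prop:no-stray-zeros}. For $m\ge 2$ two nonzero vectors can be orthogonal, and this is precisely where the argument must do real work: $\mathbf{d}_i$ and $\mathbf{e}_j$ are not arbitrary but are the same polynomials in the weights that occur throughout the gradient system, and one must show that ``$\mathbf{d}_i\perp\mathbf{e}_j$ with $\mathbf{d}_i\ne 0\ne\mathbf{e}_j$, together with all of (\ref{eq:gradLoss})'' has no solution for generic parameters --- which I would attempt via a dimension count establishing that these conditions impose one scalar equation more than $\dim\mathcal{P}$ can absorb. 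The hard part, I expect, is exactly this: for $m\ge 2$ the zero pattern of a rank-$\le m$ matrix need not be a union of full rows and columns, so the combinatorial rigidity behind the $m=1$ proofs has no direct analogue, and a proof must genuinely weave together the Hadamard regularization, the layered factorization of $W$, and genericity of the data. As a sanity check before investing in this, it would be worth extending the computations of Section~\ref{sec5} to $m=3,4$ over several architectures, since a counterexample --- were the conjecture false for larger $m$ --- would be cheap to find.
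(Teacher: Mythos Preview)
The statement you are addressing is a \emph{conjecture}: the paper offers no proof, only computational evidence for $m=2$ and the assertion that the authors believe the pattern persists for all $m$. There is therefore nothing in the paper to compare your proposal against.

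What you have written is not a proof but a research outline, and you say as much (``a plan of attack''). The outline is sensible: you correctly isolate the structural reason the $m=1$ arguments work --- the rank-one factorization $M=(Wx-y)x^T$ forces $U_Z^T M V_Z^T$ to be an outer product, whose vanishing entries come only in full rows or columns --- and you correctly identify why this breaks for $m\ge 2$, namely that $\mathbf{d}_i\perp\mathbf{e}_j$ in $\CC^m$ no longer forces either vector to vanish. Your proposed route through an incidence variety and a dimension count on the ``bad locus'' is a natural strategy, but you do not carry it out: the sentence ``one must show that \ldots has no solution for generic parameters --- which I would attempt via a dimension count'' is exactly the place where a proof would have to begin, not end. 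As it stands the proposal contains no argument that the bad locus fails to dominate the parameter space, and you yourself flag this as ``the hard part.''

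In short: the paper has no proof; you have a plausible plan with the key difficulty honestly named but unresolved. If you want to turn this into progress on the conjecture, the concrete next step is to attempt the dimension count in the smallest nontrivial case ($H=1$, $m=2$, small $d_x,d_y,d_1$) by hand or with a computer algebra system, and to run the $m=3,4$ experiments you suggest.
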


\section{Computations and Conclusions}
\label{sec5}
In this section we present results showing the number of critical points for different network architectures, i.e. different values of $H, m, d_i, d_x$ and $d_y$. When 
$H=2$ we use $d_1=d_2$ and denote it by $d_i$. The results are summarized in Tables \ref{tab:Table_H1m1} - \ref{tab:Table_H2m2} which can be found in the Appendix. Each case yields a polynomial system whose coefficients are determined by the entries of the data matrices $X \in \mathbb{R}^{d_x \times m}$ and $Y \in \mathbb{R}^{d_y \times m}$, and the regularization constants from $\Lambda_i$. The entries of $X$ and $Y$ are drawn i.i.d. from a Gaussian distribution with mean 0 and variance 1. The entries of $\Lambda_i$ are drawn i.i.d. from the uniform distribution between 0 and 1. For each case, all isolated solutions to each of the 100 samples are computed using \textit{HomotopyContinuation.jl} on a single Amazon EC2 instance. Instances used were of type \textit{c5.4xlarge} (16 vCPUs, 8 cores, 32 GB memory) or \textit{c5.9xlarge} (36 vCPUs, 18 cores, 72 GB memory), depending on the size of the polynomial system. Most systems took less than an hour of wall-clock time to solve.

Table \ref{tab:Table_H1m1} through Table \ref{tab:Table_H2m2} expand the results for $H, m \in \{1,2\}$ reported 
in \cite{MCTH22} where the authors used {\em Bertini}. We report the classical B\'ezout bound, the BKK bound, the number of complex critical points ($N_\CC$), the number of complex critical points in the algebraic torus ($N_{\CC^*}$), 
and the maximum number of real critical points observed ($\max\{N_{\mathbb{R}}\}$). Table \ref{tab:Table_H1m1} contains two additional columns reporting $\mathcal{B}_{\CC^*}$ and $\mathcal{B}_{\CC}$ of Theorem 
\ref{thm:solutions_in_C_star} and Corollary \ref{cor:reduced},
respectively. 

We observe that while the BKK bound is much better than the
B\'ezout bound, even the BKK bound is far from  the actual number of complex critical points. The number of real critical
points is even lower. The gradient equations (\ref{eq:gradLoss}) yield very structured and very sparse polynomials. Though the computed BKK bounds are not tight enough, it would be interesting to study the Newton polytopes of these equations in the hope of finding formulas for their
mixed volumes. 
Theorem \ref{thm:solutions_in_C_star}
provides much tighter bounds for the case of $H=1$ and $m=1$. We believe it is worthwhile to attempt to generalize this theorem. However, the real benefit of this theorem or its
possible generalizations is in creating custom-made homotopies
that will track only $\mathcal{B}_{\CC}$ paths. This will require constructing appropriate initial systems. 

We would like to also note that while some of our results
explain some of the numbers in the tables, there are 
other patterns waiting to be explained. Proposition \ref{prop:sameBKK} tells us that the BKK bounds 
in Table \ref{tab:Table_H1m1} and \ref{tab:Table_H1m2}, and 
those in Table \ref{tab:Table_H2m1} and \ref{tab:Table_H2m2}
for the same $d_i, d_x, d_y$ must match. From the limited
computations we have, it appears that the BKK bounds
for the identical parameters except $d_x$ and $d_y$ values
switched (for instance $d_x = 2$ and $d_y=3$ versus $d_x=3$ and $d_y=3$) coincide. We invite the reader to unearth
more patterns.

\bigskip
\medskip


\medskip

\appendix
\section{Appendix} \label{sec:appendix}
The following four tables contain the results of our computational experiments. See Section \ref{sec5} for details.

\begin{table*}[h]
\begin{tabular}{ccccccccccc}
     $d_i$ & $d_x$ & $d_y$ & $N$ & CBB & BKK & $\mathcal{B}_{\mathbb{C}}$ & $\mathcal{B}_{\mathbb{C}^*}$ & $N_{\mathbb{C}}$ & $N_{\mathbb{C}^*}$ & $\max\{N_{\mathbb{R}}\}$ \\
     \hline
     1 & 1 & 1 & 2 & 9 & 5 & 5 & 4 & 5 & 4 & 3\\ 
    1 & 2 & 1 & 3 & 27 & 9 & 5 & 4 & 5 & 4 & 3\\ 
    1 & 3 & 1 & 4 & 81 & 13 & 5 & 4 & 5 & 4 & 3\\ 
    1 & 4 & 1 & 5 & 243 & 17 & 5 & 4 & 5 & 4 & 3\\ 
    1 & 1 & 2 & 3 & 27 & 9 & 9 & 8 & 9 & 8 & 3\\ 
    1 & 2 & 2 & 4 & 81 & 33 & 9 & 8 & 9 & 8 & 3\\ 
    1 & 3 & 2 & 5 & 243 & 73 & 9 & 8 & 9 & 8 & 3\\ 
    1 & 4 & 2 & 6 & 729 & 129 & 9 & 8 & 9 & 8 & 3\\ 
    1 & 1 & 3 & 4 & 81 & 13 & 13 & 12 & 13 & 12 & 3\\ 
    1 & 2 & 3 & 5 & 243 & 73 & 13 & 12 & 13 & 12 & 3\\ 
    1 & 3 & 3 & 6 & 729 & 245 & 13 & 12 & 13 & 12 & 3\\ 
    1 & 4 & 3 & 7 & 2187 & 593 & 13 & 12 & 13 & 12 & 3\\ 
    1 & 1 & 4 & 5 & 243 & 17 & 17 & 16 & 17 & 16 & 3\\ 
    1 & 2 & 4 & 6 & 729 & 129 & 17 & 16 & 17 & 16 & 3\\ 
    1 & 3 & 4 & 7 & 2187 & 593 & 17 & 16 & 17 & 16 & 3\\ 
    1 & 4 & 4 & 8 & 6561 & 1921 & 17 & 16 & 17 & 16 & 3\\ 
    2 & 1 & 1 & 4 & 81 & 25 & 25 & 16 & 9 & 0 & 5\\ 
    2 & 2 & 1 & 6 & 729 & 81 & 25 & 16 & 9 & 0 & 5\\ 
    2 & 3 & 1 & 8 & 6561 & 169 & 25 & 16 & 9 & 0 & 5\\ 
    2 & 4 & 1 & 10 & 59049 & 289 & 25 & 16 & 9 & 0 & 5\\ 
    2 & 1 & 2 & 6 & 729 & 81 & 81 & 64 & 33 & 16 & 9\\ 
    2 & 2 & 2 & 8 & 6561 & 1089 & 81 & 64 & 33 & 16 & 9\\ 
    2 & 3 & 2 & 10 & 59049 & 5329 & 81 & 64 & 33 & 16 & 9\\ 
    2 & 4 & 2 & 12 & 531441 & 16641 & 81 & 64 & 33 & 16 & 9\\ 
    2 & 1 & 3 & 8 & 6561 & 169 & 169 & 144 & 73 & 48 & 9\\ 
    2 & 2 & 3 & 10 & 59049 & 5329 & 169 & 144 & 73 & 48 & 9\\ 
    2 & 3 & 3 & 12 & 531441 & 60025 & 169 & 144 & 73 & 48 & 9\\ 
    2 & 4 & 3 & 14 & 4782969 & 351649 & 169 & 144 & 73 & 48 & 9\\ 
    2 & 1 & 4 & 10 & 59049 & 289 & 289 & 256 & 129 & 96 & 9\\ 
    2 & 2 & 4 & 12 & 531441 & 16641 & 289 & 256 & 129 & 96 & 9\\ 
    2 & 3 & 4 & 14 & 4782969 & 351649 & 289 & 256 & 129 & 96 & 9\\ 
    3 & 1 & 1 & 6 & 729 & 125 & 125 & 64 & 13 & 0 & 7\\ 
    3 & 2 & 1 & 9 & 19683 & 729 & 125 & 64 & 13 & 0 & 7\\ 
    3 & 3 & 1 & 12 & 531441 & 2197 & 125 & 64 & 13 & 0 & 7\\ 
    3 & 4 & 1 & 15 & 14348907 & 4913 & 125 & 64 & 13 & 0 & 7\\ 
    3 & 1 & 2 & 9 & 19683 & 729 & 729 & 512 & 73 & 0 & 15\\ 
    3 & 2 & 2 & 12 & 531441 & 35937 & 729 & 512 & 73 & 0 & 15\\ 
    3 & 3 & 2 & 15 & 14348907 & 389017 & 729 & 512 & 73 & 0 & 15\\ 
    3 & 4 & 2 & 18 & 387420489 & 2146689 & 729 & 512 & 73 & 0 & 15\\ 
    3 & 1 & 3 & 12 & 531441 & 2197 & 2197 & 1728 & 245 & 64 & 19\\ 
    3 & 2 & 3 & 15 & 14348907 & 389017 & 2197 & 1728 & 245 & 64 & 23\\
    \hline
\end{tabular}
\caption{\begin{tiny} Case: $H=1, m=1$. 
   $d_i$ = \# neurons in each layer, $d_x$ = input dim. $d_y$ = output dim. $N$ = \# weights. CBB and BKK classical B\'ezout and BKK bound. $\mathcal{B}_{\mathbb{C}}$ and $\mathcal{B}_{\mathbb{C}^*}$: bounds from
   Thm \ref{thm:solutions_in_C_star} and Cor \ref{cor:reduced} in $(\mathbb{C})^N$ and  $(\mathbb{C}^*)^N$. $N_{\mathbb{C}}$ and $N_{\mathbb{C}^*}$ = actual \# critical points in $(\mathbb{C})^N$ and  $(\mathbb{C}^*)^N$, respectively. $\max\{N_{\mathbb{R}}\}$ = max \#  real sols. observed for each case. \end{tiny}}
\label{tab:Table_H1m1}
\end{table*}

\begin{table*}[h]
\begin{tabular}{ccccccccc}
    $d_i$ & $d_x$ & $d_y$ & $N$ & CBB & BKK & $N_{\mathbb{C}}$ & $N_{\mathbb{C}^*}$ & $\max\{N_\mathbb{R}\}$\\ 
    \hline
    1 & 1 & 1 & 2 & 9 & 5 & 5 & 4 & 3\\ 
    1 & 2 & 1 & 3 & 27 & 9 & 9 & 8 & 3\\ 
    1 & 3 & 1 & 4 & 81 & 13 & 9 & 8 & 3\\ 
    1 & 4 & 1 & 5 & 243 & 17 & 9 & 8 & 3\\ 
    1 & 1 & 2 & 3 & 27 & 9 & 9 & 8 & 3\\ 
    1 & 2 & 2 & 4 & 81 & 33 & 17 & 16 & 5\\ 
    1 & 3 & 2 & 5 & 243 & 73 & 17 & 16 & 9\\ 
    1 & 4 & 2 & 6 & 729 & 129 & 17 & 16 & 5\\ 
    1 & 1 & 3 & 4 & 81 & 13 & 13 & 12 & 3\\ 
    1 & 2 & 3 & 5 & 243 & 73 & 25 & 24 & 9\\ 
    1 & 3 & 3 & 6 & 729 & 245 & 25 & 24 & 9\\ 
    1 & 4 & 3 & 7 & 2187 & 593 & 25 & 24 & 5\\ 
    1 & 1 & 4 & 5 & 243 & 17 & 17 & 16 & 3\\ 
    1 & 2 & 4 & 6 & 729 & 129 & 33 & 32 & 9\\ 
    1 & 3 & 4 & 7 & 2187 & 593 & 33 & 32 & 9\\ 
    1 & 4 & 4 & 8 & 6561 & 1921 & 33 & 32 & 9\\ 
    2 & 1 & 1 & 4 & 81 & 25 & 9 & 0 & 5\\ 
    2 & 2 & 1 & 6 & 729 & 81 & 33 & 16 & 9\\ 
    2 & 3 & 1 & 8 & 6561 & 169 & 33 & 16 & 9\\ 
    2 & 4 & 1 & 10 & 59049 & 289 & 33 & 16 & 9\\ 
    2 & 1 & 2 & 6 & 729 & 81 & 33 & 16 & 9\\ 
    2 & 2 & 2 & 8 & 6561 & 1089 & 225 & 192 & 25\\ 
    2 & 3 & 2 & 10 & 59049 & 5329 & 225 & 192 & 25\\ 
    2 & 4 & 2 & 12 & 531441 & 16641 & 225 & 192 & 29\\ 
    2 & 1 & 3 & 8 & 6561 & 169 & 73 & 48 & 9\\ 
    2 & 2 & 3 & 10 & 59049 & 5329 & 705 & 656 & 29\\ 
    2 & 3 & 3 & 12 & 531441 & 60025 & 705 & 656 & 29\\ 
    2 & 4 & 3 & 14 & 4782969 & 351649 & 705 & 656 & 29\\ 
    2 & 1 & 4 & 10 & 59049 & 289 & 129 & 96 & 9\\ 
    2 & 2 & 4 & 12 & 531441 & 16641 & 1601 & 1536 & 33\\ 
    2 & 3 & 4 & 14 & 4782969 & 351649 & 1601 & 1536 & 33\\ 
    3 & 1 & 1 & 6 & 729 & 125 & 13 & 0 & 7\\ 
    3 & 2 & 1 & 9 & 19683 & 729 & 73 & 0 & 19\\ 
    3 & 3 & 1 & 12 & 531441 & 2197 & 73 & 0 & 15\\ 
    3 & 4 & 1 & 15 & 14348907 & 4913 & 73 & 0 & 19\\ 
    3 & 1 & 2 & 9 & 19683 & 729 & 73 & 0 & 18\\ 
    3 & 2 & 2 & 12 & 531441 & 35937 & 1265 & 640 & 57\\ 
    3 & 3 & 2 & 15 & 14348907 & 389017 & 1265 & 640 & 65\\ 
    3 & 4 & 2 & 18 & 387420489 & 2146689 & 1265 & 640 & 61\\ 
    3 & 1 & 3 & 12 & 531441 & 2197 & 245 & 64 & 23\\ 
    3 & 2 & 3 & 15 & 14348907 & 389017 & 8825 & 6784 & 101\\
    \hline
\end{tabular}
\caption{Case: $H=1, m=2$}
\label{tab:Table_H1m2}
\end{table*}

\begin{table*}[h]
\begin{tabular}{ccccccccc}
    $d_i$ & $d_x$ & $d_y$ & $N$ & CBB & BKK & $N_{\mathbb{C}}$ & $N_{\mathbb{C}^*}$ & $\max\{N_\mathbb{R}\}$\\ 
    \hline
    1 & 1 & 1 & 3 & 125 & 17 & 17 & 16 & 9\\ 
    1 & 2 & 1 & 4 & 625 & 33 & 17 & 16 & 9\\ 
    1 & 3 & 1 & 5 & 3125 & 49 & 17 & 16 & 9\\ 
    1 & 4 & 1 & 6 & 15625 & 65 & 17 & 16 & 9\\ 
    1 & 1 & 2 & 4 & 625 & 33 & 33 & 32 & 9\\ 
    1 & 2 & 2 & 5 & 3125 & 129 & 33 & 32 & 9\\ 
    1 & 3 & 2 & 6 & 15625 & 289 & 33 & 32 & 9\\ 
    1 & 4 & 2 & 7 & 78125 & 513 & 33 & 32 & 9\\ 
    2 & 1 & 1 & 8 & 390625 & 1857 & 129 & 64 & 64\\ 
    2 & 2 & 1 & 10 & 9765625 & 23937 & 129 & 64 & 64\\ 
    2 & 3 & 1 & 12 & 244140625 & 109249 & 129 & 64 & 64\\ 
    2 & 4 & 1 & 14 & 6103515625 & 325377 & 129 & 64 & 64\\ 
    2 & 1 & 2 & 10 & 9765625 & 23937 & 641 & 384 & 64\\ 
    2 & 2 & 2 & 12 & 244140625 & 780801 & 641 & 384 & 65\\ 
    3 & 1 & 1 & 15 & 30517578125 & 667921 & 977 & 256 & 232\\
    \hline
\end{tabular}
\caption{Case: $H=2, m=1$}
\label{tab:Table_H2m1}
\end{table*}

\begin{table*}[h]
\begin{tabular}{ccccccccc}
    $d_i$ & $d_x$ & $d_y$ & $N$ & CBB & BKK & $N_{\mathbb{C}}$ & $N_{\mathbb{C}^*}$ & $\max\{N_\mathbb{R}\}$\\ 
    \hline
    1 & 1 & 1 & 3 & 125 & 17 & 17 & 16 & 9\\ 
    1 & 2 & 1 & 4 & 625 & 33 & 33 & 32 & 9\\ 
    1 & 3 & 1 & 5 & 3125 & 49 & 33 & 32 & 9\\ 
    1 & 4 & 1 & 6 & 15625 & 65 & 33 & 32 & 9\\ 
    1 & 1 & 2 & 4 & 625 & 33 & 33 & 32 & 9\\ 
    1 & 2 & 2 & 5 & 3125 & 129 & 81 & 80 & 24\\ 
    1 & 3 & 2 & 6 & 15625 & 289 & 81 & 80 & 17\\ 
    1 & 4 & 2 & 7 & 78125 & 513 & 81 & 80 & 17\\
    2 & 1 & 1 & 8 & 390625 & 1857 & 129 & 64 & 64\\ 
    2 & 2 & 1 & 10 & 9765625 & 23937 & 641 & 384 & 65\\ 
    2 & 3 & 1 & 12 & 244140625 & 109249 & 641 & 384 & 64\\ 
    2 & 4 & 1 & 14 & 6103515625 & 325377 & 641 & 384 & 72\\ 
    2 & 1 & 2 & 10 & 9765625 & 23937 & 641 & 384 & 72\\
    \hline
\end{tabular}
\caption{Case: $H=2, m=2$}
\label{tab:Table_H2m2}
\end{table*}

\end{document}